\theoremstyle{plain}
\newtheorem{theorem}{Theorem}
\numberwithin{equation}{section}
\newcommand{\R}{\mathbb{R}}
\newcommand{\C}{\mathbb{C}}
\newcommand{\sn}{{\rm sn}}
\newcommand{\ii}{{\rm i}}
\newcommand{\dd}{{\rm dn}_2}
\newcommand{\sss}{{\rm sn}_2}
\newcommand{\dk}{{\rm dn}_{\kappa}}
\newcommand{\dl}{{\rm dn}_{\lambda}}
\newcommand{\Kk}{K_{\kappa}}
\newcommand{\Kl}{K_{\lambda}}
\begin{document}

\title {The elliptic function ${\rm dn}_2$ of Shen}

\date{}

\author[P.L. Robinson]{P.L. Robinson}

\address{Department of Mathematics \\ University of Florida \\ Gainesville FL 32611  USA }

\email[]{paulr@ufl.edu}

\subjclass{} \keywords{}

\begin{abstract}

We analyze the elliptic function ${\rm dn}_2$ introduced by Li-Chien Shen, contributing to the Ramanujan theory of elliptic functions in signature four. 

\end{abstract}

\maketitle

\medbreak

\section*{Introduction}

\medbreak 

In [4] Li-Chien Shen introduced and studied  a signature-four counterpart $\dd$ to the classical Jacobian elliptic function ${\rm dn}$. He showed that the function $\dd$ is elliptic, expressing it in terms of its coperiodic Weierstrass function and in terms of associated Jacobian functions; he also located its poles and gave expressions for its fundamental periods. The formulation throughout was based on theta functions, and included much more information than this summary suggests. 

\medbreak 

Our aim in the present paper is twofold. On the one hand, we amplify several aspects of the account presented in [4] and view matters from a different perspective, completely avoiding theta functions. On the other hand, we reinforce the position of $\dd$ in Ramanujan's theory of elliptic functions to alternative bases, in part by showing how $\dd$ enables the establishment of hypergeometric identities relating signature four both to itself and to the classical signature. 

\medbreak 

\section*{Preliminary results}

\medbreak 

Our primary purpose in this opening section is to construct the elliptic function $\dd$ and present some of its most basic properties. We do not aim at anything approaching completeness: for further detail, see the original account [4] in which $\dd$ made its first appearance; see also the partial reworking [3] in line with the present paper.  

\medbreak 

We begin by fixing a modulus $\kappa \in (0, 1)$ and defining 
$$f(T) = \int_0^T F(\tfrac{1}{4}, \tfrac{3}{4}; \tfrac{1}{2}; \kappa^2 \sin^2 t) \, {\rm d} t$$
where $F = \, _2F_1$ is the hypergeometric function with the indicated parameters. Here, $f$ is analytic and invertible in a sufficiently small open neighbourhood of $0$; its analytic inverse $\phi$ is then defined in a suitably small open disc $D$ about $0$ and an auxiliary analytic function $\psi$ is defined in $D$ by $\psi(0) = 0$ and the requirement $\sin \psi = \kappa \sin \phi$. 

\medbreak 

Now $\dd$ is initially the analytic function defined in $D$ by the rule 
$$\dd = \cos \psi.$$ 
A companion analytic function $\sss$ is defined in $D$ by the rule 
$$\sss = \sin \phi.$$ 
Note that in $D$ there holds the counterpart 
$$\dd^2 = 1 - \sin^2 \psi = 1 - \kappa^2 \sin^2 \phi = 1 - \kappa^2 \sss^2$$ 
to the identity ${\rm dn}^2 + \kappa^2 {\rm sn}^2 = 1$ familiar from the theory of Jacobian elliptic functions. 

\medbreak 

Here, the precise choice of $D$ is largely immaterial: it turns out that $\dd$ as defined above is in fact the restriction to $D$ of an elliptic function, this elliptic function being $\dd$ proper. 

\medbreak 

It is now convenient to introduce the complementary modulus $\lambda = \sqrt{1 - \kappa^2} \in (0, 1)$. 

\medbreak 

\begin{theorem} \label{DE}
The function $\dd$ satisfies $\dd(0) = 1$ and the differential equation 
$$(\dd ')^2 = 2 (1 - \dd) ( \dd^2 - \lambda^2).$$
\end{theorem}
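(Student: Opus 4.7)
The initial condition $\dd(0) = 1$ is immediate: since $\phi(0) = f^{-1}(0) = 0$, the convention $\psi(0) = 0$ yields $\dd(0) = \cos 0 = 1$.

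For the differential equation, the plan is to reduce everything to the angle $\psi$. Differentiating $\dd = \cos\psi$ gives $\dd' = -\sin\psi \cdot \psi'$; differentiating the defining relation $\sin\psi = \kappa\sin\phi$ gives $\cos\psi \cdot \psi' = \kappa\cos\phi \cdot \phi'$; and inverting $f$ gives $\phi' = 1/F(\tfrac14,\tfrac34;\tfrac12;\kappa^2\sin^2\phi)$. Combining these produces
$$ (\dd')^2 = \frac{\kappa^2 \sin^2\psi \cos^2\phi}{\dd^2 \, F^2}, $$
where $F$ denotes the hypergeometric factor, evaluated at $\kappa^2\sin^2\phi = \sin^2\psi$. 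The task thus reduces to evaluating $F^2 \dd^2$ in closed form.

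The crucial input is the classical Gauss identity
$$ F\!\left(a, 1-a; \tfrac12; \sin^2 z\right) = \frac{\cos((2a-1)z)}{\cos z}, $$
applied at $a = \tfrac14$ and $z = \psi$ to give $F \cdot \dd = \cos(\psi/2)$. The half-angle formula $\cos^2(\psi/2) = (1 + \cos\psi)/2 = (1 + \dd)/2$ then yields $F^2 \dd^2 = (1 + \dd)/2$. Substituting back into the expression for $(\dd')^2$ and using the identities $\sin^2\psi = 1 - \dd^2 = (1 - \dd)(1 + \dd)$ together with $\kappa^2\cos^2\phi = \kappa^2 - \kappa^2\sin^2\phi = \dd^2 - \lambda^2$ produces, after cancellation of the common factor $1 + \dd$, the desired $(\dd')^2 = 2(1 - \dd)(\dd^2 - \lambda^2)$.

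The sole nontrivial step is the invocation of the Gauss identity; everything else is routine algebraic bookkeeping. Since this hypergeometric evaluation is a classical special case (and is implicit in Shen's theta-function treatment in [4]), no substantive obstacle remains.
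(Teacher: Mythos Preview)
Your proof is correct and is essentially the paper's own argument: both hinge on the same hypergeometric evaluation $F(\tfrac14,\tfrac34;\tfrac12;\sin^2\psi)=\cos(\tfrac12\psi)/\cos\psi$, followed by elementary half-angle manipulations. The only cosmetic difference is that the paper computes $\dd'=-2\kappa\sin\tfrac12\psi\,\cos\phi$ directly (cancelling a $\cos\tfrac12\psi$ via $\sin\psi=2\sin\tfrac12\psi\cos\tfrac12\psi$) before squaring, whereas you square first and cancel a factor $1+\dd$ via $\sin^2\psi=(1-\dd)(1+\dd)$; these are the same calculation in a different order.
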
 

\begin{proof} 
As $\phi$ is inverse to $f$ and as 
$$f ' \circ \phi = F(\tfrac{1}{4}, \tfrac{3}{4}; \tfrac{1}{2}; \kappa^2 \sin^2 \phi) = F(\tfrac{1}{4}, \tfrac{3}{4}; \tfrac{1}{2}; \sin^2 \psi)$$ 
it follows from the standard hypergeometric evaluation 
$$F(\tfrac{1}{4}, \tfrac{3}{4}; \tfrac{1}{2}; \sin^2 \psi) = \frac{\cos \frac{1}{2} \psi}{\cos \psi}$$
that 
$$\phi ' = \frac{\cos \psi}{\cos \frac{1}{2} \psi}.$$
Additionally, $\sin \psi = \kappa \sin \phi$ implies 
$$\psi ' = \frac{\kappa \cos \phi}{\cos \psi}$$ 
while $\dd = \cos \psi$ implies 
$$\dd ' = - \sin \psi \, \psi '.$$ 
As $\sin \psi = 2 \sin \frac{1}{2} \psi \cos \frac{1}{2} \psi$ it follows that 
$$\dd ' = - 2 \, (\sin \frac{1}{2} \psi) (\kappa \cos \phi).$$ 
As $\cos \psi = 1 - 2 \sin^2 \frac{1}{2} \psi$ and $\kappa^2 - \sin^2 \psi = \cos^2 \psi - \lambda^2$ it follows that 
$$(\dd ')^2 = 2 (1 - \dd) ( \dd^2 - \lambda^2).$$
\end{proof} 

\medbreak 

At present, $\dd$ is defined locally around $0$. The following result allows us to globalize. 

\medbreak 

\begin{theorem} \label{dell}
The function $\dd$ satisfies 
$$(1 - \dd) (\tfrac{1}{3} + \wp) = \tfrac{1}{2} \kappa^2$$ 
where $\wp$ is the Weierstrass elliptic function with invariants 
$$g_2 = \tfrac{4}{3} - \kappa^2 = \lambda^2 + \tfrac{1}{3}$$ 
$$g_3 = \tfrac{8}{27} - \tfrac{1}{3} \kappa^2 = \tfrac{1}{3} \lambda^2 - \tfrac{1}{27}.$$ 
\end{theorem}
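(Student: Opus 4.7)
The plan is to observe that the desired identity is equivalent to the assertion $P = \wp$, where
\begin{equation*}
P := \frac{\kappa^2}{2(1 - \dd)} - \frac{1}{3}.
\end{equation*}
It therefore suffices to verify that $P$ satisfies the Weierstrass differential equation with the stated invariants and has the same principal part as $\wp$ at the origin; analytic continuation then extends the identity across the full domain of $\wp$, and in fact globalizes $\dd$ itself via $\dd = 1 - \tfrac{\kappa^2}{2(\wp + 1/3)}$.

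For the differential equation, first recast Theorem \ref{DE} in terms of $y := 1 - \dd$ (using $1 - \lambda^2 = \kappa^2$) as $(y')^2 = 2\kappa^2 y - 4 y^2 + 2 y^3$. Since $y = \tfrac{\kappa^2}{2(P + 1/3)}$, differentiation gives $y' = -\tfrac{\kappa^2 P'}{2(P + 1/3)^2}$; substituting and clearing denominators yield
\begin{equation*}
(P')^2 = 4\bigl(P + \tfrac{1}{3}\bigr)^3 - 4\bigl(P + \tfrac{1}{3}\bigr)^2 + \kappa^2\bigl(P + \tfrac{1}{3}\bigr).
\end{equation*}
Expanding and collecting by powers of $P$, the $P^2$-coefficient cancels exactly, and matching against $4P^3 - g_2 P - g_3$ produces $g_2 = \tfrac{4}{3} - \kappa^2$ and $g_3 = \tfrac{8}{27} - \tfrac{1}{3}\kappa^2$, in agreement with the stated invariants.

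For the principal part, I use that $\dd$ is an even function of $z$ (because the integrand defining $f$ is even in $t$, $\phi$ and hence $\psi$ are odd) to expand $y = a_2 z^2 + a_4 z^4 + \ldots$ at $0$. From $\dd''(0) = -\kappa^2$, read off by differentiating the DE, one finds $a_2 = \tfrac{\kappa^2}{2}$, and then matching the $z^4$-coefficient in $(y')^2 = 2\kappa^2 y - 4 y^2 + 2 y^3$ gives $a_4 = -\tfrac{\kappa^2}{6}$. A short computation then yields the Laurent expansion $P(z) = z^{-2} + O(z^2)$, with vanishing constant term. The main obstacle is precisely the vanishing of that constant term, since it is what distinguishes $\wp(z)$ from its translates $\wp(z - z_0)$ — without this sharper information one would obtain only $P(z) = \wp(z - z_0)$ for some $z_0$, rather than the identity claimed.
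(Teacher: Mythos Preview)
Your argument is essentially the paper's: set $p = -\tfrac{1}{3} + \tfrac{\kappa^2}{2(1-\dd)}$, use Theorem~\ref{DE} to check that $(p')^2 = 4p^3 - g_2 p - g_3$ with the stated invariants, and observe that $p$ has a pole at the origin. The paper stops right there, and rightly so: every nonconstant meromorphic solution of the Weierstrass equation is of the form $\wp(z-z_0)$, and if such a solution has a pole at $0$ then $z_0$ is a lattice point, whence $p=\wp$.

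Your third paragraph is therefore superfluous, and its closing remark is a small misconception. The vanishing of the constant term is \emph{not} what singles out $\wp$ among its translates: the translates $\wp(z-z_0)$ with $z_0$ off the lattice are regular at $0$, so the bare fact that $P$ has a pole there already forces $z_0\equiv 0$. In fact the differential equation itself forces any pole to have Laurent expansion beginning $z^{-2}+0\cdot z^{-1}+0+O(z^2)$ (match the $z^{-5}$ and $z^{-4}$ terms on the two sides), so the vanishing you verified by hand via $a_2,a_4$ is automatic once the equation is established.
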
 

\begin{proof} 
The function 
$$p = - \frac{1}{3} + \frac{\frac{1}{2} \kappa^2}{1 - \dd}$$
has a pole at $0$ and direct calculation shows that it satisfies 
$$(p ')^2 = 4 p^3 - (\lambda^2 + \tfrac{1}{3}) p - (\lambda^2 - \tfrac{1}{27}).$$ 
This initial value problem characterizes the Weierstrass function with the stated invariants. 
\end{proof} 

\medbreak 

Briefly, we may say that $\dd$ is the elliptic function given by 
$$\dd = 1 - \tfrac{1}{2} \kappa^2 / (\tfrac{1}{3} + \wp).$$ 
Were greater clarity desirable, we could initially define ${\rm d} = \cos \psi$ on $D$: from ${\rm d}(0) = 1$ and the differential equation $({\rm d} ')^2 = 2 (1 - {\rm d}) ({\rm d}^2 - \lambda^2)$ it would then follow that ${\rm d}$ is the restriction to $D$ of this elliptic function. 

\medbreak 

The invariants of $\wp$ being real and its discriminant 
$$\Delta = g_2^3 - 27 g_3^2 = \kappa^4 \lambda^2$$ 
being positive, the period lattice of $\wp$ is rectangular. Let $2 \omega = 2 K$ and $2 \omega ' = 2 \ii K '$ be fundamental periods with $K > 0$ and $K ' > 0$. It is a familiar fact that $\wp$ has purely real values on the edges of the half-period rectangle $Q$ with vertices 
$$0, \, K, \, K + \ii K ', \, \ii K ';$$ 
also that the values of $\wp$ decrease strictly from $+ \infty$ to $- \infty$ upon counterclockwise passage around the perimeter of $Q$ starting from and returning to $0$. The (midpoint) values of $\wp$ at the non-zero vertices of $Q$ are the zeros of the cubic $4 w^3 - g_2 w - g_3$ in $w$: explicitly, 
$$\wp(K) = \tfrac{1}{6} + \tfrac{1}{2} \lambda, \, \wp( K + \ii K ') = \tfrac{1}{6} - \tfrac{1}{2} \lambda, \, \wp( \ii K ') = - \tfrac{1}{3}.$$ 

\bigbreak 

\begin{theorem} \label{values}
The elliptic function $\dd$ has $2 K$ and $2 \ii K '$ as fundamental periods. It has a double pole at $\ii K '$; also, $\dd(K) = \lambda$ and $\dd(K + \ii K ') = - \lambda.$ 
\end{theorem}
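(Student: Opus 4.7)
My plan is to derive everything from the formula $\dd = 1 - \tfrac{1}{2}\kappa^2/(\tfrac{1}{3}+\wp)$ of Theorem \ref{dell}, combined with the values of $\wp$ at the half-period vertices of $Q$ recalled just before the statement. Since $\dd$ is a M\"obius function of $\wp$, it automatically inherits $2K$ and $2\ii K'$ as periods; the bulk of the work is then (i) identifying the pole and its order at $\ii K'$, (ii) evaluating $\dd$ at the other two half-period vertices, and (iii) arguing that the inherited periods are fundamental.

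For the pole at $\ii K'$, I would note that $\wp(\ii K') = -\tfrac{1}{3}$ makes $\tfrac{1}{3}+\wp$ vanish there, while the fact that $\ii K'$ is a half-period forces $\wp'(\ii K') = 0$. These combine to show $\tfrac{1}{3}+\wp$ has a double zero at $\ii K'$, so $\dd$ has a double pole there (as $\kappa^2 \ne 0$). For the two stated values, I substitute $\wp(K) = \tfrac{1}{6}+\tfrac{1}{2}\lambda$ and $\wp(K+\ii K') = \tfrac{1}{6}-\tfrac{1}{2}\lambda$ into the formula for $\dd$ and simplify using $\kappa^2 = (1-\lambda)(1+\lambda)$; this yields $\lambda$ and $-\lambda$ respectively in a line or two of arithmetic.

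The only mildly non-routine point is the fundamentality of $2K$ and $2\ii K'$ as periods of $\dd$. Here I would argue that any period $\omega$ of $\dd$ must permute the pole set of $\dd$, and that this pole set is precisely the coset $\ii K' + \Lambda_\wp$ (being the preimage of $-\tfrac{1}{3}$ under $\wp$). Consequently $\omega \in \Lambda_\wp$, so the period lattice of $\dd$ coincides with that of $\wp$, and $2K,\, 2\ii K'$ form a fundamental pair. I do not anticipate any real obstacle: once the M\"obius-transform viewpoint and the vanishing of $\wp'$ at half-periods are in hand, the rest is direct substitution.
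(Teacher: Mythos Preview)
Your proposal is correct and follows essentially the same route as the paper: use the M\"obius relation of Theorem~\ref{dell} together with the listed midpoint values of $\wp$ to read off the double pole at $\ii K'$ and the values $\pm\lambda$, and infer the periods from those of $\wp$. The only cosmetic difference is that the paper disposes of fundamentality in a word (``coperiodic''), relying on the invertibility of the M\"obius relation, whereas your pole-set argument reaches the same conclusion by a slightly longer path.
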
 

\begin{proof} 
Theorem \ref{dell} shows that the elliptic functions $\dd$ and $\wp$ are coperiodic. Moreover, the lattice midpoint $\ii K '$ is a double zero of $\frac{1}{3} + \wp$ and hence a double pole of $\dd$. The values of $\dd$ at $K$ and $K + \ii K '$ follow from those of $\wp$. 
\end{proof} 

\medbreak 

Here, note that the values of $\dd$ decrease strictly from $+ \infty$ to $- \infty$ upon counterclockwise passage around the perimeter of $Q$ starting from and returning to $\ii K '$. 

\medbreak 

The Weierstrass function $\wp$ has an associated triple ${\rm sn}, \, {\rm cn}, \, {\rm dn}$ of classical Jacobian elliptic functions. With the semistandard notation 
$$e_1 = \wp(K), \, e_2 = \wp(K + \ii K '), \, e_3 = \wp(\ii K ')$$
for the midpoint values in decreasing order, these Jacobian functions have modulus $k \in (0, 1)$ given by 
$$k^2 = \frac{e_2 - e_3}{e_1 - e_3}$$
and the Jacobian modular sine function ${\rm sn}$ is related to $\wp$ by 
$$\wp (z) = e_3 + \frac{e_1 - e_3}{{\rm sn}^2 [z \,\sqrt{e_1 - e_3} ]}.$$ 

\medbreak 

\begin{theorem} \label{sn}
The elliptic function $\dd$ satisfies 
$$\dd(z) = 1 - (1 - \lambda) {\rm sn}^2 \Big[z \, \sqrt{\frac{1 + \lambda}{2}}\Big]$$
where ${\rm sn} = {\rm sn} (\bullet, k)$ is the Jacobian sine function with modulus $k$ given by 
$$k^2 = \frac{1 - \lambda}{1 + \lambda}.$$
\end{theorem}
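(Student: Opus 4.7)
The plan is to combine Theorem \ref{dell} with the displayed identity $\wp(z) = e_3 + (e_1 - e_3)/{\rm sn}^2[z\sqrt{e_1-e_3}]$ that immediately precedes the statement. Theorem \ref{dell} may be rearranged to
$$1 - \dd = \frac{\tfrac{1}{2}\kappa^2}{\tfrac{1}{3} + \wp},$$
so the task reduces to computing $\tfrac{1}{3} + \wp$ in terms of ${\rm sn}$, and then simplifying the resulting constants using $\kappa^2 = 1 - \lambda^2$.

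First I would substitute the midpoint-value formulas $e_1 = \tfrac{1}{6} + \tfrac{1}{2}\lambda$, $e_2 = \tfrac{1}{6} - \tfrac{1}{2}\lambda$, $e_3 = -\tfrac{1}{3}$ into the $\wp$-to-${\rm sn}$ identity. Since $\tfrac{1}{3} + e_3 = 0$, the $e_3$ term is annihilated and one obtains cleanly
$$\tfrac{1}{3} + \wp(z) = \frac{e_1 - e_3}{{\rm sn}^2[z\sqrt{e_1 - e_3}]}.$$
A one-line computation gives $e_1 - e_3 = \tfrac{1+\lambda}{2}$, which immediately produces the argument $z\sqrt{(1+\lambda)/2}$ appearing in the claim.

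Next I would feed this into the rearranged form of Theorem \ref{dell} to obtain
$$1 - \dd(z) = \frac{\kappa^2}{2(e_1-e_3)}\, {\rm sn}^2\!\Big[z\sqrt{(1+\lambda)/2}\Big] = \frac{\kappa^2}{1+\lambda}\, {\rm sn}^2\!\Big[z\sqrt{(1+\lambda)/2}\Big],$$
and simplify the leading coefficient via $\kappa^2 = (1-\lambda)(1+\lambda)$ to get exactly $1-\lambda$. Finally, to identify the modulus, I would compute
$$k^2 = \frac{e_2 - e_3}{e_1 - e_3} = \frac{(1-\lambda)/2}{(1+\lambda)/2} = \frac{1-\lambda}{1+\lambda},$$
matching the claim.

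There is no real obstacle here: everything is mechanical once Theorem \ref{dell} and the standard $\wp$-to-${\rm sn}$ identity are in hand. The only point needing any care is the small miracle that $\tfrac{1}{3} + e_3 = 0$, which is what allows the $\wp$-to-${\rm sn}$ conversion to yield a pure ${\rm sn}^2$ expression rather than a rational combination; this, however, is built into Theorem \ref{dell} by design, since the double pole of $\dd$ at $\ii K'$ had to coincide with the double zero of $\tfrac{1}{3} + \wp$ there.
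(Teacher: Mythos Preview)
Your proposal is correct and follows essentially the same approach as the paper: the paper's own proof is the single line ``From Theorem \ref{dell}, noting that $e_1 - e_3 = (1 + \lambda)/2$ and $e_2 - e_3 = (1 - \lambda)/2$,'' and you have simply filled in the mechanical details of that computation. Your observation that $\tfrac{1}{3} + e_3 = 0$ is what makes the substitution clean is a nice gloss but not a new idea beyond what the paper intends.
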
 

\begin{proof} 
From Theorem \ref{dell}, noting that $e_1 - e_3 = (1 + \lambda) / 2$ and $e_2 - e_3 = (1 - \lambda) / 2$. 
\end{proof} 

\medbreak 

We close this section by returning to the initial definition of $\dd$. Note that the formula for $f(T)$ is valid whenever $T$ is real, thereby defining a function $f : \R \to \R$ whose periodic derivative is strictly positive. The inverse $\phi$ of $f$ is therefore defined on the whole real line: if $u \in \R$ then 
$$u = \int_0^{\phi (u)} F(\tfrac{1}{4}, \tfrac{3}{4}; \tfrac{1}{2}; \kappa^2 \sin^2 t) \, {\rm d} t.$$
More is true. From the proof of Theorem \ref{DE} we see that 
$$(\phi ')^2 = \frac{2 \cos^2 \psi}{1 + \cos \psi} = \frac{2 \, \dd^2}{1 + \dd}.$$
Here, $\dd$ has poles at $\pm \ii K '$, while taking the value $-1 \in (- \infty, - \lambda)$ at a point $w$ on the upper edge of the rectangle $Q$ and at the conjugate point $\overline{w}$. It follows (by `square-rooting') that $\phi '$ extends holomorphically to the open band $\{ z \in \C : |{\rm Im} \, z| < K ' \}$ and then (by integrating from $\phi(0) = 0$) that $\phi$ itself extends holomorphically to the same band. Similarly, as the zeros of $1 - \dd$ (at points congruent to $0$ modulo periods) are double, the identity 
$$\dd^2 + \kappa^2 \sss^2 = 1$$
implies that $\sss$ extends holomorphically to the very same band, throughout which the relation $\sss = \sin \circ \phi$ continues to hold. 

\medbreak 

\section*{Fundamental periods}

\medbreak 

Here, we derive explicit formulae for the fundamental periods $2 K$ and $2 \ii K'$ of the elliptic function $\dd$. In fact, we derive more than one formula for each of these periods and take more than one approach to do so. Our formulae express these periods in terms of the modulus $\kappa$ and complementary modulus $\lambda = \sqrt{1 - \kappa^2}$. 

\medbreak 

In the interests of clarity, we shall replace $\dd$ by $\dk$ as a symbol for the elliptic function hitherto constructed with $\kappa$ as modulus, making similar replacements when referring to fundamental periods and half-period rectangles. The complementary modulus $\lambda$ engenders a corresponding elliptic function $\dl$ with fundamental periods $2 \Kl$ and $2 \ii \Kl '$. 

\medbreak 

Recall from Theorem \ref{DE} that $\dk$ satisfies the differential equation 
$$(\dk ')^2 = 2 (1 - \dk) (\dk^2 - \lambda^2).$$
Recall also the behaviour of $\dk$ on the rectangle $Q_{\kappa}$: it decreases strictly from $\dk (0) = 1$ to $\dk (\Kk) = \lambda$ along the lower edge $[0, \Kk]$ and decreases strictly from $\dk (\Kk) = \lambda$ to $\dk (\Kk + \ii \Kk ') = - \lambda$ along the right edge $[\Kk, \Kk + \ii \Kk ']$. It follows that 
$$\dk ' = - \sqrt{2 (1 - \dk) (\dk^2 - \lambda^2)}$$ 
along $[0, \Kk]$ and therefore that 
$$\int_0^{\Kk} 1 = - \int_{\dk(0)}^{\dk(\Kk)}\frac{{\rm d} x}{\sqrt{2 (1 - x) (x^2 - \lambda^2)}}$$
or 
$$\Kk = \int_{\lambda}^1 \frac{{\rm d} x}{\sqrt{2 (1 - x) (x^2 - \lambda^2)}}.$$ 
In order to calculate $\Kk '$ we integrate up the right edge of $Q_{\kappa}$ as follows: the function ${\rm d}$ defined by ${\rm d} (y) = \dk (\Kk + \ii y)$ satisfies 
$$- ({\rm d} ')^2 = 2 (1 - {\rm d}) ({\rm d}^2 - \lambda^2)$$
and decreases along $[0, \Kk ']$ so that on this interval  
$${\rm d}' = - \sqrt{2 (1 - {\rm d}) (\lambda^2 - {\rm d}^2)}$$
and therefore 
$$\Kk ' = \int_{- \lambda}^{\lambda} \frac{{\rm d} y}{\sqrt{2 (1 - y) (\lambda^2 - y^2)}}.$$ 

\medbreak 

In preparation for the following theorem, specify the complementary acute angles $\alpha$ and $\beta$ by requiring that $\kappa = \cos \alpha$ and $\lambda = \cos \beta$. Further, when $\gamma$ is an acute angle write 
$$I(\gamma) = \int_0^{\gamma} \frac{\cos \frac{1}{2} t}{\sqrt{\cos^2 t - \cos^2 \gamma}} \, {\rm d} t.$$

\bigbreak 

\begin{theorem} \label{IK}
The fundamental periods $2 \Kk$ and $2 \ii \Kk '$ of $\dk$ are given by 
$$\Kk = I (\beta) \; \; {\rm and} \; \; \Kk ' = \sqrt2 \, I(\alpha).$$ 
\end{theorem}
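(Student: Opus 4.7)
The plan is to reduce each of the two integral expressions for the periods that were derived just before the theorem, namely
\[
\Kk = \int_\lambda^1 \frac{\rd x}{\sqrt{2(1-x)(x^2-\lambda^2)}}, \qquad \Kk' = \int_{-\lambda}^{\lambda} \frac{\rd y}{\sqrt{2(1-y)(\lambda^2 - y^2)}},
\]
to the required form $I(\gamma)$ by means of trigonometric substitutions tailored to the angles $\alpha$ and $\beta$.

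For the first identity $\Kk = I(\beta)$, I would make the substitution $x = \cos t$; since $\lambda = \cos\beta$, this maps $t \in [0, \beta]$ bijectively onto $x \in [\lambda, 1]$ with $\rd x = -\sin t\, \rd t$. The factor $x^2 - \lambda^2$ becomes $\cos^2 t - \cos^2 \beta$ directly, while the half-angle identity $1 - \cos t = 2 \sin^2(t/2)$ converts $\sqrt{2(1-x)}$ into $2 |\sin(t/2)|$. Combined with $\sin t = 2\sin(t/2)\cos(t/2)$, the factor $\sin(t/2)$ cancels against the denominator and leaves precisely $I(\beta)$. This is routine and presents no real obstacle.

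For $\Kk' = \sqrt{2} I(\alpha)$, the natural substitution is $y = \sin t$, because $\lambda = \sin \alpha$ makes $\lambda^2 - y^2 = \cos^2 t - \cos^2 \alpha$, which is exactly the form appearing in $I$; here $t$ ranges over $[-\alpha, \alpha]$ and $\rd y = \cos t\, \rd t$. The difficulty is that now $1 - y = 1 - \sin t = 2\sin^2\bigl(\tfrac{\pi}{4} - \tfrac{t}{2}\bigr)$, so the resulting integrand involves $\cos\bigl(\tfrac{\pi}{4} - \tfrac{t}{2}\bigr)$ rather than $\cos(t/2)$, and the domain of integration is not symmetric about zero in a way that immediately yields $I(\alpha)$. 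To handle this I would write $\cos t = \sin\bigl(\tfrac{\pi}{2} - t\bigr) = 2\sin\bigl(\tfrac{\pi}{4} - \tfrac{t}{2}\bigr)\cos\bigl(\tfrac{\pi}{4} - \tfrac{t}{2}\bigr)$, so that the $\sin\bigl(\tfrac{\pi}{4} - \tfrac{t}{2}\bigr)$ in the denominator cancels; this leaves
\[
\Kk' = \int_{-\alpha}^{\alpha} \frac{\cos\bigl(\tfrac{\pi}{4} - \tfrac{t}{2}\bigr)}{\sqrt{\cos^2 t - \cos^2 \alpha}} \, \rd t.
\]
To conclude, I would split the interval at $0$ and reverse orientation on $[-\alpha, 0]$, noting that $\cos^2 t$ is even; adding the two halves and applying the sum-to-product identity
\[
\cos\bigl(\tfrac{\pi}{4} - \tfrac{t}{2}\bigr) + \cos\bigl(\tfrac{\pi}{4} + \tfrac{t}{2}\bigr) = 2\cos\tfrac{\pi}{4}\cos\tfrac{t}{2} = \sqrt{2}\,\cos\tfrac{t}{2}
\]
produces the expected factor $\sqrt{2}$ and the desired integrand $\cos(t/2)/\sqrt{\cos^2 t - \cos^2\alpha}$, giving $\Kk' = \sqrt{2}\, I(\alpha)$.

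The main obstacle is locating the correct rewriting of $1 - \sin t$ and then recognizing that the asymmetry about $0$ in the transformed integrand is resolved, rather than damaged, by symmetrization over $[-\alpha, \alpha]$; the appearance of $\sqrt{2}$ in the final formula is the visible footprint of this symmetrization.
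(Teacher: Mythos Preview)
Your proof is correct and follows essentially the same route as the paper: the treatment of $\Kk$ is identical, and for $\Kk'$ the paper uses the two-step substitution $y=\cos\theta$, $t=\theta-\tfrac{\pi}{2}$ (net effect $y=-\sin t$) where you go directly with $y=\sin t$, reaching the integrand $\tfrac{1}{\sqrt2}\bigl(\cos\tfrac{t}{2}-\sin\tfrac{t}{2}\bigr)$ in place of your $\cos\bigl(\tfrac{\pi}{4}-\tfrac{t}{2}\bigr)$ before appealing to parity. The underlying mechanism---half-angle identities plus symmetrization over $[-\alpha,\alpha]$ to extract the factor $\sqrt2$---is the same in both arguments.
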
 

\begin{proof} 
The following argument is extracted from [4] Theorem 3.3. In the integral formula 
$$\Kk = \int_{\lambda}^1 \frac{{\rm d} x}{\sqrt{2 (1 - x) (x^2 - \lambda^2)}}$$ 
substitute $x = \cos t$ and (as agreed) $\lambda = \cos \beta$: there follows 
$$\Kk = \int_{\beta}^0 \frac{- \sin t }{\sqrt{2 (1 - \cos t)(\cos^2 t - \cos^2 \beta)}}\, {\rm d} t$$
whence by trigonometric dimidiation and cancellation 
$$\Kk = \int_0^{\beta} \frac{\cos \frac{1}{2} t}{\sqrt{\cos^2 t - \cos^2 \beta}} \, {\rm d} t.$$
This establishes the first of the advertised formulae; the second is a little more demanding. In the integral formula 
$$\Kk ' = \int_{- \lambda}^{\lambda} \frac{{\rm d} y}{\sqrt{2 (1 - y) (\lambda^2 - y^2)}}$$ 
substitute first 
$$y = \cos \theta \; \; {\rm where} \; \; \beta \leqslant \theta \leqslant \pi - \beta$$ 
and then 
$$t = \theta - \tfrac{1}{2} \pi \; \; {\rm where} \; \; - \alpha = \beta - \tfrac{1}{2} \pi \leqslant t \leqslant \tfrac{1}{2} \pi - \beta = \alpha.$$ 
The first substitution leads to  
$$\Kk ' = \int_{\beta}^{\pi - \beta} \frac{\cos \frac{1}{2} \theta}{\sqrt{\cos^2 \beta - \cos^2 \theta}} \, {\rm d} \theta$$
essentially as above, whereupon the second substitution gives  
$$\Kk ' = \frac{1}{\sqrt2} \, \int_{- \alpha}^{\alpha} \frac{(\cos \frac{1}{2} t - \sin \frac{1}{2} t)}{\sqrt{\cos^2 t - \cos^2 \alpha}} \, {\rm d} t$$ 
because $\cos^2 \alpha + \cos^2 \beta = 1 = \cos^2 t + \cos^2 \theta$; finally, trigonometric parities justify 
$$\Kk ' = \sqrt2 \, \int_0^{\alpha} \frac{\cos \frac{1}{2} t}{\sqrt{\cos^2 t - \cos^2 \alpha}} \, {\rm d} t.$$
\end{proof} 

\medbreak 

An interesting conclusion to draw here is the symmetric pair of relations 
$$\Kk ' = \sqrt2 \, \Kl \; \; {\rm and} \; \; \Kl ' = \sqrt2 \, \Kk$$ 
involving the complementary moduli $\kappa$ and $\lambda$. These relations have implications for the size and shape of the half-period rectangles $Q_{\kappa}$ and $Q_{\lambda}$ associated to the corresponding elliptic functions $\dk$ and $\dl$. Thus 
$$\Kk \, \Kk ' = \Kl \, \Kl '$$
(so that the half-period rectangles have equal area) and 
$$\frac{\Kk '}{\Kk} \, \frac{\Kl '}{\Kl} = 2.$$ 

\medbreak 

An alternative evaluation of $\Kk$ and $\ii \Kk '$ proceeds by recognizing that the formulae derived leading up to Theorem \ref{IK} express these `half-periods' as elliptic integrals. For this evaluation, we quote the following results from Section 43 of the classic text [2] by Greenhill, with minor notational adjustments. 

\medbreak 

Let the cubic 
$$T = (t - a) (t - b) (t - c)$$ 
have real roots 
$$a > b > c.$$ 
Then 
$$\int_b^a \frac{{\rm d} t}{\sqrt{- T}} = \frac{2}{\sqrt{a - c}} \, K\Big(\frac{a - b}{a - c}\, \Big)$$
and 
$$\int_c^b \frac{{\rm d} t}{\sqrt{T}} = \frac{2}{\sqrt{a - c}} \, K\Big(\frac{b - c}{a - c}\, \Big)$$
where if $0 < k < 1$ then $K(k^2)$ stands for the complete elliptic integral given by 
$$K(k^2) = \int_0^{\frac{1}{2} \pi} \frac{{\rm d} \theta}{\sqrt{1 - k^2 \sin^2 \theta}} = \tfrac{1}{2} \pi F(\tfrac{1}{2} , \tfrac{1}{2} ; 1 ; k^2).$$

\bigbreak 

\begin{theorem} \label{Kell}
The fundamental periods $2 \Kk$ and $2 \ii \Kk '$ of $\dk$ are given by 
$$\Kk = \sqrt{\frac{2}{1 + \lambda}} \, K\Big(\frac{1 - \lambda}{1 + \lambda}\, \Big)$$
and 
$$\Kk ' = \sqrt{\frac{2}{1 + \lambda}} \, K\Big(\frac{2 \lambda}{1 + \lambda}\, \Big).$$
\end{theorem}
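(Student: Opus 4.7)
The plan is to apply, essentially verbatim, the two Greenhill integral formulae quoted immediately before the theorem to the elliptic integrals
\[
\Kk = \int_{\lambda}^1 \frac{\rd x}{\sqrt{2(1-x)(x^2-\lambda^2)}}, \qquad \Kk ' = \int_{-\lambda}^{\lambda} \frac{\rd y}{\sqrt{2(1-y)(\lambda^2-y^2)}}
\]
obtained in the lead-up to Theorem \ref{IK}. In each case one and the same cubic is in play: with $a = 1$, $b = \lambda$, $c = -\lambda$ I set $T = (t-a)(t-b)(t-c) = (t-1)(t-\lambda)(t+\lambda)$. After the constant factor $\sqrt{2}$ is pulled outside, the two integrands become $1/\sqrt{-T}$ on $[b,a] = [\lambda, 1]$ and $1/\sqrt{T}$ on $[c,b] = [-\lambda, \lambda]$ respectively, using the identity $(y-1)(y-\lambda)(y+\lambda) = (1-y)(\lambda^2 - y^2)$ in the second case.

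Greenhill's first formula then delivers
\[
\int_{\lambda}^1 \frac{\rd x}{\sqrt{(1-x)(x^2-\lambda^2)}} = \frac{2}{\sqrt{1+\lambda}} \, K\Big(\frac{1-\lambda}{1+\lambda}\Big),
\]
and his second formula delivers
\[
\int_{-\lambda}^{\lambda} \frac{\rd y}{\sqrt{(1-y)(\lambda^2-y^2)}} = \frac{2}{\sqrt{1+\lambda}} \, K\Big(\frac{2\lambda}{1+\lambda}\Big).
\]
Dividing each right-hand side by $\sqrt{2}$, to compensate for the factor extracted earlier, gives precisely the two formulae claimed in the theorem.

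There is no real obstacle here: the whole argument is template-matching, and the single item requiring verification is the sign of each of the linear factors $t-1$, $t-\lambda$, $t+\lambda$ on the two intervals, so as to confirm that $-T$ and $T$ are indeed nonnegative where needed; this is immediate from the arrangement $1 > \lambda > -\lambda$. As an independent check, both formulae may also be extracted from Theorem \ref{sn}: the real quarter-period of $\sn(\,\cdot\,, k)$ is $K(k^2)$ and the imaginary quarter-period is $K(1-k^2)$, and with $k^2 = (1-\lambda)/(1+\lambda)$ one has $1 - k^2 = 2\lambda/(1+\lambda)$, in perfect agreement once the substitution $z \mapsto z\sqrt{(1+\lambda)/2}$ is accounted for.
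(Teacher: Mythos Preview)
Your proof is correct and follows exactly the paper's approach: apply Greenhill's two formulae to the cubic $T = (t-1)(t-\lambda)(t+\lambda)$ with $a=1$, $b=\lambda$, $c=-\lambda$, and absorb the factor $\sqrt{2}$. Your added remarks on signs and on the Jacobian cross-check are sound and in fact mirror the paper's own commentary following the theorem.
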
 

\begin{proof} 
Here, we take the cubic $T$ given by 
$$T = (t - 1) (t - \lambda) (t + \lambda)$$and apply the foregoing results quoted from [2] with $a = 1$, $b = \lambda$ and $c = - \lambda$ so that 
$$a - c = 1 + \lambda, \; \; \frac{a - b}{a - c} = \frac{1 - \lambda}{1 + \lambda} \; \; {\rm and} \; \; \frac{b - c}{a - c} = \frac{2 \lambda}{1 + \lambda}.$$
\end{proof} 

\medbreak 

Otherwise said, 
$$\Kk = \tfrac{1}{2} \, \pi \, \sqrt{\frac{2}{1 + \lambda}} \, F\Big(\frac{1}{2} , \frac{1}{2} ; 1 ; \frac{1 - \lambda}{1 + \lambda}\, \Big)$$
and 
$$\Kk ' = \tfrac{1}{2} \, \pi \, \sqrt{\frac{2}{1 + \lambda}} \, F\Big(\frac{1}{2} , \frac{1}{2} ; 1 ; \frac{2 \lambda}{1 + \lambda}\, \Big).$$

\bigbreak 

The results of Theorem \ref{Kell} can equivalently be approached from a Weierstrassian direction. Let $\dk$ have coperiodic Weierstrass function $\wp$ as in Theorem \ref{dell}. As noted prior to Theorem \ref{values}, the midpoint values of $\wp$ (named according to one of the standard conventions) are 
$$e_1 = \tfrac{1}{6} + \tfrac{1}{2} \lambda, \, e_2 = \tfrac{1}{6} - \tfrac{1}{2} \lambda, \, e_3 = - \tfrac{1}{3}.$$ 
At this stage, we merely quote from Section 51 of [2]: the half-period $\omega = \Kk$ of $\wp$ is given by 
$$\omega = \frac{K (k^2)}{\sqrt{e_1 - e_3}}$$  
where 
$$e_1 - e_3 = \frac{1 + \lambda}{2}$$ 
and 
$$k^2 = \frac{e_2 - e_3}{e_1 - e_3} = \frac{1 - \lambda}{1 + \lambda}$$
whence we recover the previous formula for $\Kk$. Similarly, 
$$\omega ' = \ii \, \frac{K (k'^{ \, 2})}{\sqrt{e_1 - e_3}}$$  
where now 
$$k '{ \,^2} = \frac{e_1 - e_2}{e_1 - e_3} = \frac{2 \lambda}{1 + \lambda}$$
and we recover the previous formula for $\Kk '$. 

\medbreak 

The same results can be approached from a Jacobian elliptic direction. Recall from Theorem \ref{sn} that 
$$\dk (z) = 1 - (1 - \lambda) \, \sn^2 \Big(\sqrt{\frac{1 + \lambda}{2}}\,z \, \Big)$$
where $\sn = \sn (\bullet, k)$ is the Jacobian sine function with modulus $k$ given by 
$$k^2 = \frac{1 - \lambda}{1 + \lambda}\,.$$ 
Now, from $\dk (\Kk) = \lambda$ it follows that $\sn^2 \Big(\sqrt{\frac{1 + \lambda}{2}} \, \Kk \, \Big) = 1$; moreover, if $0 < x < \Kk$ then $1 > \dk (x) > \lambda$ so that $0 < \sn^2 \Big(\sqrt{\frac{1 + \lambda}{2}}x \, \Big) < 1$. It follows easily that $\sqrt{\frac{1 + \lambda}{2}} \, \Kk$ is the least positive $x$ such that $\sn (x) = 1$: that is, 
$$\sqrt{\frac{1 + \lambda}{2}} \, \Kk = K\Big(\frac{1 - \lambda}{1 + \lambda}\, \Big)$$
or again 
$$\Kk = \sqrt{\frac{2}{1 + \lambda}} \, K\Big(\frac{1 - \lambda}{1 + \lambda}\, \Big).$$
Similarly up the imaginary axis: $\dk$ encounters its first (double) pole at $\ii \Kk '$ while $\sn$ encounters its first (simple) pole at 
$$\ii K ' \Big(\frac{1 - \lambda}{1 + \lambda}\, \Big) = \ii K\Big(\frac{2 \lambda}{1 + \lambda}\, \Big)$$
so that again 
$$\Kk ' =  \sqrt{\frac{2}{1 + \lambda}} \, K\Big(\frac{2 \lambda}{1 + \lambda}\, \Big).$$

\bigbreak

\medbreak 

Certain values of the modulus $\kappa$ perhaps deserve special mention. The self-complementary value $\kappa = 1 / \sqrt2$ is plainly singled out: according to the conclusions drawn from Theorem \ref{IK}, the period rectangle of $\dk = \dl$ (and of the coperiodic Weierstrass function) is given by the ratio $\Kk ' / \Kk = \sqrt2$; as expected, the corresponding Jacobian modulus $k$ is $\sqrt2 - 1$ because 
$$k^2 = \frac{1 - \lambda}{1 + \lambda} = \frac{\sqrt2 - 1}{\sqrt2 + 1} = (\sqrt2 - 1)^2.$$
Next, let $\kappa = 2 \sqrt2 / 3$ so that $\lambda = 1 / 3$. In this case, Theorem \ref{dell} shows that the coperiodic Weierstrass function has $g_3 = 0$ so that the period lattice is square: this geometric fact is evident from Theorem \ref{sn} because the corresponding Jacobian modulus $k = 1 / \sqrt2$ is self-complementary; it is also evident from Theorem \ref{Kell} because $ 1 - \lambda = 2 \lambda$. Instead, let $\kappa = 1/3$ so that $\lambda = 2 \sqrt2 / 3$: in this complementary case, the discussion after Theorem \ref{IK} shows that $\Kk ' / \Kk = 2$ and the corresponding Jacobian modulus has the familiar value 
$$k = \sqrt{\frac{1 - \lambda}{1 + \lambda}} = \sqrt{\frac{3 - 2 \sqrt2}{3 + 2 \sqrt2}} = 3 - 2 \sqrt2 = \tan^2 \frac{\pi}{8}.$$

\medbreak

\section*{Hypergeometric identities} 

\medbreak 

Thus far, we have avoided what may be the most natural formula for the fundamental period $2 \Kk$ of $\dk$. We proceed to rectify this omission, after which we extract from this formula and those of the previous section a number of hypergeometric function identities. None of these identities is new; however, our approach to them presents new aspects and opens up the possibility of further identities. 

\medbreak 

\medbreak 

For the following paragraph, we temporarily return to the earlier notation $\dd$ in place of $\dk$ and $K$ in place of $\Kk$. 

\medbreak 

Recall from the close of our `Preliminary Results' that both $\sss$ and $\phi$ extend to functions that are holomorphic in the open band 
$\{ z \in \C : |{\rm Im} \, z| < K ' \}$
where they continue to satisfy the defining relation $\sss = \sin \circ \phi.$ Recall also the identity 
$$\dd^2 + \kappa^2 \sss^2 = 1$$ 
from which we deduce, as we did for $\dd$ and $\sn$ in the discussion following Theorem \ref{Kell}, that $\sss (K) = 1$ and indeed that $\phi (K) = \tfrac{1}{2} \pi$. Lastly, recall that if $u \in \R$ then 
$$u = \int_0^{\phi (u)} F(\tfrac{1}{4}, \tfrac{3}{4}; \tfrac{1}{2} ; \kappa^2 \sin^2 t) \, {\rm d} t.$$ 

\medbreak 

Let us now reinstate the more informative notation $\dk$ and $\Kk$.

\medbreak 

\begin{theorem} \label{F4}
The fundamental half-period $\Kk$ of $\dk$ is given by 
$$\Kk = \tfrac{1}{2} \pi \, F(\tfrac{1}{4}, \tfrac{3}{4} ; 1 ; \kappa^2 ).$$ 
\end{theorem}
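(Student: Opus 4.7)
The plan is to exploit the very definition of $\phi$ together with the boundary value $\phi(\Kk) = \tfrac{1}{2}\pi$ flagged in the paragraph immediately preceding the theorem, and then to identify the resulting hypergeometric integral with a $_2F_1$ by termwise expansion and the Wallis formula.

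First, I would set $u = \Kk$ in the integral representation
$$u = \int_0^{\phi(u)} F(\tfrac{1}{4}, \tfrac{3}{4}; \tfrac{1}{2} ; \kappa^2 \sin^2 t) \, {\rm d} t$$
valid for real $u$. Since $\sss(\Kk) = 1$ forces $\phi(\Kk) = \tfrac{1}{2}\pi$ (as recorded above), this immediately yields
$$\Kk = \int_0^{\pi/2} F(\tfrac{1}{4}, \tfrac{3}{4}; \tfrac{1}{2} ; \kappa^2 \sin^2 t) \, {\rm d} t.$$

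Next, I would expand the hypergeometric function as its Maclaurin series in $\kappa^2 \sin^2 t$, namely
$$F(\tfrac{1}{4}, \tfrac{3}{4}; \tfrac{1}{2}; \kappa^2 \sin^2 t) = \sum_{n = 0}^{\infty} \frac{(1/4)_n (3/4)_n}{(1/2)_n \, n!} \, \kappa^{2n} \sin^{2n} t.$$
Since $\kappa \in (0,1)$, this series is uniformly convergent in $t \in [0, \tfrac{1}{2}\pi]$, so termwise integration is permitted. Applying the Wallis formula
$$\int_0^{\pi/2} \sin^{2n} t \, {\rm d} t = \tfrac{1}{2}\pi \, \frac{(1/2)_n}{n!}$$
cancels the factor $(1/2)_n$ against its reciprocal in the summand, leaving
$$\Kk = \tfrac{1}{2} \pi \sum_{n = 0}^{\infty} \frac{(1/4)_n (3/4)_n}{(n!)^2} \, \kappa^{2n} = \tfrac{1}{2} \pi \, F(\tfrac{1}{4}, \tfrac{3}{4}; 1; \kappa^2),$$
as advertised.

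The argument is short and the only step that calls for care is the justification of $\phi(\Kk) = \tfrac{1}{2}\pi$, which depends on the holomorphic extension of $\sss$ to the horizontal band $|\mathrm{Im}\, z| < \Kk'$ and on the identity $\dk^2 + \kappa^2 \sss^2 = 1$ together with $\dk(\Kk) = \lambda$; these are precisely the ingredients assembled in the preliminary discussion and cited in the paragraph before the theorem. The remaining manipulation is purely formal Pochhammer algebra.
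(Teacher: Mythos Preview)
Your proof is correct and follows essentially the same route as the paper: the paper invokes the general identity $\int_0^{\pi/2} F(a,b;\tfrac{1}{2};\kappa^2\sin^2 t)\,{\rm d}t = \tfrac{1}{2}\pi F(a,b;1;\kappa^2)$ (noting it ``may be verified by termwise integration of the hypergeometric series'') and then applies the recollections $\phi(\Kk)=\tfrac{1}{2}\pi$ and the defining integral for $u=\Kk$. You simply carry out that termwise integration explicitly via the Wallis formula, which is exactly the verification the paper alludes to.
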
 

\begin{proof} 
It is a familiar fact that if $a$ and $b$ are arbitrary then 
$$\int_0^{\frac{1}{2} \pi} F(a, b ; \tfrac{1}{2} ; \kappa^2 \sin^2 t) \, {\rm d} t = \tfrac{1}{2} \pi F(a, b ; 1 ; \kappa^2)$$ 
as may be verified by termwise integration of the hypergeometric series. Let $a = \tfrac{1}{4}$ and $b = \tfrac{3}{4}$; then summon the recollections that prefaced this Theorem.  
\end{proof} 

\medbreak 

We are now ready to extract hypergeometric identities. 

\medbreak 

Firstly, direct comparison of Theorem \ref{F4} with the first formula after Theorem \ref{Kell} at once yields the hypergeometric identity 
$$F(\frac{1}{4}, \frac{3}{4} ; 1 ; 1 - \lambda^2 ) = \sqrt{\frac{2}{1 + \lambda}} F\Big(\frac{1}{2} , \frac{1}{2} ; 1 ; \frac{1 - \lambda}{1 + \lambda}\, \Big)$$
when it is recalled that $\kappa^2 = 1 - \lambda^2$. This identity is proved as Theorem 9.2 in [1]. If $\lambda$ is replaced by $(1 - x) / (1 + x)$ - equivalently, if $x$ is replaced by $(1 - \lambda) / (1 + \lambda)$ - then this identity becomes the quadratic transform listed under Corollary 4.2 in [4]. 

\medbreak 

Secondly, direct comparison of Theorem \ref{F4} with the second formula after Theorem \ref{Kell} at once yields the hypergeometric identity 
$$F(\frac{1}{4}, \frac{3}{4} ; 1 ; \lambda^2 ) = \sqrt{\frac{1}{1 + \lambda}} F\Big(\frac{1}{2} , \frac{1}{2} ; 1 ; \frac{2 \lambda}{1 + \lambda}\, \Big)$$
when $\Kk '= \sqrt2 \, \Kl$ is recalled from Theorem \ref{IK}. This identity is recorded on page 260 of Ramanujan's second notebook; it is proved as Theorem 9.1 in [1]. 

\medbreak 

Also recorded on page 260 of Ramanujan's second notebook is the following transformation law. It appears as Theorem 9.4 in [1]; there, it is proved by analytic continuation from small values of the variable. It also appears as Lemma 6.2 in [4]; there, the proof involves theta functions and Landen transformations.  

\medbreak 

\begin{theorem} \label{tr} 
If $0 < x < 1$ then 
$$\sqrt{1 + 3 x} \,  F\Big(\frac{1}{4}, \frac{3}{4} ; 1 ; x^2 \Big) = F\Big(\frac{1}{4}, \frac{3}{4} ; 1 ; 1 - \Big(\frac{1 - x}{1 + 3 x}\Big)^2 \Big).$$
\end{theorem}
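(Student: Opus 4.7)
The plan is to apply Theorem \ref{Kell} to the right-hand side of the asserted identity in order to rewrite it in classical (Jacobian) form, and then to recognize that the resulting rescaled identity is nothing other than the ``second'' hypergeometric identity recorded immediately after Theorem \ref{Kell}. In this way the transformation law is reduced to material already in hand, with no appeal to theta functions, Landen transformations, or analytic continuation.

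In detail, set $\nu = (1 - x)/(1 + 3x)$ and introduce $\mu \in (0, 1)$ by requiring $\mu^2 = 1 - \nu^2$, so that $\nu$ is the modulus complementary to $\mu$. By Theorem \ref{F4}, the right-hand side of the asserted identity equals $(2/\pi) K_{\mu}$, where $K_{\mu}$ denotes the real half-period of $\dk$ at modulus $\kappa = \mu$. Applying Theorem \ref{Kell} to $K_{\mu}$ yields
$$K_{\mu} \;=\; \sqrt{\tfrac{2}{1 + \nu}}\; K\Big(\tfrac{1 - \nu}{1 + \nu}\Big),$$
where $K$ on the right denotes the classical complete elliptic integral. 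Elementary algebra from the definition of $\nu$ gives $1 + \nu = 2(1 + x)/(1 + 3x)$ and $(1 - \nu)/(1 + \nu) = 2x/(1 + x)$, so that the prefactor $\sqrt{1 + 3x}$ on the left-hand side of the claim cancels cleanly against $\sqrt{2/(1 + \nu)}$, and the whole claim collapses to the reduced identity
$$\sqrt{1 + x}\; \cdot\; \tfrac{\pi}{2}\, F(\tfrac{1}{4}, \tfrac{3}{4}; 1; x^2) \;=\; K\Big(\tfrac{2x}{1 + x}\Big).$$

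This reduced identity is precisely the second hypergeometric identity derived after Theorem \ref{Kell}, namely $F(\tfrac{1}{4}, \tfrac{3}{4}; 1; \lambda^2) = (1 + \lambda)^{-1/2}\, F(\tfrac{1}{2}, \tfrac{1}{2}; 1; 2\lambda/(1 + \lambda))$, specialized to $\lambda = x$ and cast in classical form via $K(m) = (\pi/2) F(\tfrac{1}{2}, \tfrac{1}{2}; 1; m)$. The entire argument is therefore a matter of algebraic reduction plus citation of earlier results. The only step requiring a little attention is the simplification $(1 - \nu)/(1 + \nu) = 2x/(1 + x)$ that forces the arguments of the two classical $K$ functions to match; this bookkeeping is the main (and only mild) obstacle.
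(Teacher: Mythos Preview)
Your proof is correct and is essentially the same as the paper's: both arguments combine the two hypergeometric identities displayed after Theorem~\ref{Kell}, applying the first (with complementary modulus $\nu = (1-x)/(1+3x)$) to rewrite the right-hand side and the second (with $\lambda = x$) to handle the left, the crucial algebraic observation in each case being $(1-\nu)/(1+\nu) = 2x/(1+x)$. The only cosmetic difference is that the paper starts from the trivial equality $F(\tfrac12,\tfrac12;1;\tfrac{2x}{1+x}) = F(\tfrac12,\tfrac12;1;\tfrac{1-\nu}{1+\nu})$ and transforms each side, whereas you route the first identity through Theorems~\ref{F4} and~\ref{Kell} explicitly.
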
 

\begin{proof} 
Let $x$ and $y$ be related by the condition 
$$\frac{2 x}{1 + x} = \frac{1 - y}{1 + y}$$ 
and consider the equation 
$$F\Big(\frac{1}{2} , \frac{1}{2} ; 1 ; \frac{2 x}{1 + x}\, \Big) = F\Big(\frac{1}{2} , \frac{1}{2} ; 1 ; \frac{1 - y}{1 + y}\, \Big).$$
On the left side, 
$$F\Big(\frac{1}{2} , \frac{1}{2} ; 1 ; \frac{2 x}{1 + x}\, \Big) = \sqrt{1 + x} F(\frac{1}{4}, \frac{3}{4} ; 1 ; x^2 )$$
by virtue of the second hypergeometric identity displayed just prior to the present Theorem. On the right side, 
$$F\Big(\frac{1}{2} , \frac{1}{2} ; 1 ; \frac{1 - y}{1 + y}\, \Big) = \sqrt{\frac{1 + y}{2}} \, F(\frac{1}{4}, \frac{3}{4} ; 1 ; 1 - y^2 )$$
by virtue of the first hypergeometric identity displayed prior to this Theorem. Finally, the condition relating $x$ and $y$ is equivalent both to   
$$y = \frac{1 - x}{1 + 3 x}$$
and to 
$$\frac{1 + y}{2} = \frac{1 + x}{1 + 3 x}$$  
\medbreak
\noindent
so substitutions and cancellation conclude the proof. 
\end{proof} 

\medbreak 

Incidentally (and perhaps contrary to appearances) the conditions relating $x$ and $y$ in this proof are actually symmetric in $x$ and $y$: they are equivalent to $x + y + 3 x y = 1$. 

\medbreak

\bigbreak

\begin{center} 
{\small R}{\footnotesize EFERENCES}
\end{center} 
\medbreak 

[1] B.C. Berndt, S. Bhargava, and F.G. Garvan, {\it Ramanujan's theories of elliptic functions to alternative bases}, Transactions of the American Mathematical Society {\bf 347} (1995) 4163-4244. 

\medbreak 

[2] A.G. Greenhill, {\it The Applications of Elliptic Functions}, Macmillan and Company (1892); Dover Publications (1959). 

\medbreak 

[3] P.L. Robinson, {\it Elliptic functions from $F(\tfrac{1}{4}, \tfrac{3}{4}; \tfrac{1}{2} ; \bullet)$}, arXiv 1908.01687 (2019). 

\medbreak 

[4] Li-Chien Shen, {\it On a theory of elliptic functions based on the incomplete integral of the hypergeometric function $_2 F_1 (\frac{1}{4}, \frac{3}{4} ; \frac{1}{2} ; z)$}, Ramanujan Journal {\bf 34} (2014) 209-225. 

\medbreak

\end{document}